\documentclass[10pt]{article}

\usepackage{cite, amsmath, amssymb, booktabs, lastpage, graphicx}
\usepackage[hidelinks]{hyperref}
\usepackage{placeins}
\usepackage{geometry}
\usepackage{setspace}
\usepackage{xurl} 

\usepackage[mathscr]{euscript}
\usepackage{amsthm}
\usepackage{xcolor}
\usepackage{tikz}
\usetikzlibrary{calc}
\theoremstyle{plain}

\newtheorem{theorem}{Theorem}

\newtheorem{proposition}[theorem]{Proposition}
\newtheorem*{corollary}{Corollary}

\theoremstyle{remark}
\newtheorem*{remark}{Remark}

\theoremstyle{definition}
\newtheorem{definition}{Definition}
\newtheorem{example}{Example}

\usepackage{graphicx}

\makeatletter
\renewcommand{\maketitle}{
\begin{center}

{\Large\bfseries \@title\par}
\vspace{6mm}

{\large\bfseries \@author\par}
\vspace{4mm}

{\itshape \@address\par}
\vspace{2mm}

{\small\ttfamily \@email\par}
\vspace{5mm}

\vspace{5mm}

\end{center}
}
\makeatother

\makeatletter
\newcommand{\address}[1]{\gdef\@address{#1}}
\newcommand{\email}[1]{\gdef\@email{#1}}
\makeatother

\address{}
\email{}

\usepackage{fancyhdr}

\usepackage[margin=1cm,%
			font=footnotesize,%
			format=hang,%
			labelsep=period,%
			labelfont=bf]{caption}
\allowdisplaybreaks
\title{On the Vertex Seidel Energy of Graphs}
\author{Kalpesh M. Popat$^{a}$, Enide Andrade$^b$}
\address{$^{a}$Department of Mathematics, \\ Saurashtra University,  Rajkot-360005, \\ Gujarat, India.\\[4mm]
$^{b}$Center for Research and Development in Mathematics and Applications (CIDMA),  Department of Mathematics, \\
University of Aveiro, 3810-193 Aveiro Portugal.}
\email{kalpeshmpopat@gmail.com, enide@ua.pt}
\date{\today}
\begin{document}
\maketitle
\thispagestyle{empty}
\begin{abstract}
We introduce the vertex Seidel energy via the diagonal entries of the absolute Seidel matrix. We establish a spectral formula, compute exact values for several graph families, derive bounds, and present a Coulson-type integral representation for analytical study of this invariant. We also show that vertex Seidel energy is invariant under Seidel switching and complementation.

\end{abstract}
\noindent\textbf{Keywords:} Seidel matrix; graph energy; vertex energy; Coulson integral formula. \\[2mm]
\noindent\textbf{2020 Mathematics Subject Classification:} 05C50; 15A18.

\onehalfspacing

\section{Introduction}
Before presenting a brief state of the art, we introduce the notations and definitions used throughout the paper. Let $G$ be a simple undirected graph on $n$ vertices, and denote its adjacency matrix by $A(G)$. When vertices $v_i$ and $v_j$ are adjacent we just denote it by $v_i \sim v_j.$ The matrices $J_k$ and $I_k$ are the all-ones matrix and identity of orders $k$, respectively. When the order is understood from the context we just write $J$ and $I.$ The absolute value of a matrix $B$ is defined as $|B| = (BB^{\star})^{1/2}$ where $B^{\star}$ is its conjugate transpose. The transpose of $B$ is denoted by $B^\top$ and, when $B$ is square, the trace of $B$ is denoted by $\operatorname{tr}(B)$. We denote by $\mathbb{F}_q$ the finite field with $q$ elements.  We denote by $e_i \in \mathbb{R}^n$ the $i$-th standard basis vector.The inner product of two vectors $u,v \in \mathbb{R}^n$ is denoted by $\langle u,v \rangle.$ The complete graph and the complete bipartite graph are denoted by $K_n$ and $K_{m,n}$, respectively. Throughout the paper, $\mathrm{i}=\sqrt{-1}$ denotes the imaginary unit. We write $S=S(G)$ for the Seidel matrix of $G$ and $S^{(i)}$ for the principal submatrix obtained by deleting the $i$-th row and column (corresponding to vertex $v_i$). Let $\Phi(\lambda)$ denote the characteristic polynomial of $S$, and $\Phi_i(\lambda)$ the characteristic polynomial of $S^{(i)}$, that is, $\Phi(\lambda):=\det(\lambda I - S),\;
\Phi_i(\lambda):=\det(\lambda I - S^{(i)}).$ The \emph{Seidel spectrum} of $G$ is the eigenvalues of $S$ counted with multiplicity. When it is necessary to indicate the underlying graph explicitly, we write 
\(\mathcal{E}_S(v_i;G)\) for the vertex Seidel energy of \(v_i\) in \(G\).

A graph is called \emph{$k$-regular} if every vertex has degree $k$. A graph $G$ is said to be \emph{strongly regular} with parameters $(v,k,\lambda,\mu)$ if it has $v$ vertices, is $k$-regular, every pair of adjacent vertices has exactly $\lambda$ common neighbors, and every pair of non-adjacent vertices has exactly $\mu$ common neighbors. A \emph{conference graph} of order \(v\) is a strongly regular graph with parameters \(\Big(v,\;\tfrac{v-1}{2},\;\tfrac{v-5}{4},\;\tfrac{v-1}{4}\Big).\) For a prime power $q \equiv 1 \pmod{4}$, the \emph{Paley graph} $P_q$ has vertex set $\mathbb{F}_q$, the finite field with $q$ elements, where two distinct vertices $x$ and $y$ are adjacent if and only if $x-y$ is a nonzero quadratic residue in $\mathbb{F}_q$. Paley graphs are strongly regular with parameter \(v=q\). A graph is called \emph{vertex-transitive} if its automorphism group acts transitively on its vertex set. Let \(G\) be a simple graph. The \emph{complement} of \(G\), denoted by \(\overline{G}\), is the graph on the same vertex set in which two distinct vertices are adjacent in \(\overline{G}\) if and only if they are non-adjacent in \(G\). Let \(G\) be a graph on vertex set \(V\) and let \(X\subseteq V\). The \emph{Seidel switch} with respect to \(X\) produces the graph \(G^X\) whose edges between \(X\) and \(V\setminus X\) are complemented while edges inside \(X\) and inside \(V\setminus X\) remain unchanged. 

The interaction between spectral graph theory and mathematical chemistry gave rise to many ``energy'' invariants. The classical \emph{graph energy} of Gutman \cite{gutman1978energy} is the sum of absolute values of the eigenvalues of the adjacency matrix $A(G)$. The concept has since been systematized and extended in several directions; see the monographs \cite{li2012graph,brouwer2012,cvetkovic2010}.

The \emph{Seidel matrix} of a graph $G$ with vertex set $\{v_1,\dots,v_n\}$ is the $n\times n$ matrix $S(G)=[s_{ij}]$ defined by $s_{ii}=0$ for all $i$, and for $i\ne j$ by $s_{ij}=-1$ if $v_i\sim v_j$ and $s_{ij}=1$ otherwise. Equivalently, $S(G)=J-I-2A(G).$ If $\theta_1,\dots,\theta_n$ are the eigenvalues of $S(G)$ then the \emph{Seidel energy} \cite{haemers2012switching} is defined as
\[
\mathcal{E}_S(G):=\sum_{j=1}^n |\theta_j|.
\]
This invariant has been extensively studied in the literature; in particular, it is preserved under Seidel switching and graph complementation \cite{haemers2012switching,ramane,oboudi2016,popat2019,oboudi2023}. Haemers conjectured (and Akbari \emph{et al.} later proved) that $\mathcal{E}_S(G)\ge 2n-2$ for every graph of order $n$, with equality for $K_n$ \cite{haemers2012switching,akbari2020}.

Let $M\in\mathbb{R}^{n\times n}$ be a real symmetric matrixwith spectral decomposition $M=U\operatorname{diag}(\lambda_1,\dots,\lambda_n)U^\top$, where $U=[u_{ij}]$ is orthonormal. Then $|M|=(M^2)^{1/2}=U\operatorname{diag}(|\lambda_1|,\dots,|\lambda_n|)U^\top$. For $1\le i\le n$ denote by $\varphi_i$ the $i$-th coordinate functional $\varphi_i(X)=e_i^\top X e_i$. The \emph{vertex $M$-energy} at the coordinate $i$ (or at vertex $v_i$ when $M$ is a graph matrix) is
\[
\mathcal{E}_M(v_i):=\varphi_i\big(|M|\big)=|M|_{ii}=\sum_{j=1}^n u_{ij}^2\,|\lambda_j|.
\]
The total $M$-energy equals $\operatorname{tr}|M|=\sum_{j=1}^n|\lambda_j|$ and satisfies $\mathcal{E}(M)=\sum_{i=1}^n\mathcal{E}_M(v_i)$, so the vertex energies partition the total energy. This framework has been used extensively in the study of vertex energies: for adjacency vertex energy in \cite{arizmendi2018energy, arizmendi2019coulson,gutman2025tutorial,shrikanth2025vertex,nagesh2025distribution,dede2025double}, for Laplacian vertex energy in \cite{guerrero2022laplacian}, and for Randi\'c vertex energy in \cite{gogoi2025}.
 
Specializing the matrix-energy framework to the Seidel matrix $S(G)$, we introduce the \emph{vertex Seidel energy} of a vertex $v_i$; the formal definition, its spectral representation, and closed forms for standard families complete, complete bipartite and Paley graphs appear in Section~\ref{sec:seidel-vertex-energy}. We derive general bounds in Section~\ref{sec:bounds}, characterize when the vertex Seidel energy is constant in Section~\ref{ssec:constant-energy}, present a Coulson-type integral formula in Section~\ref{sec:coulson}, and discuss Seidel switching and complementation in Section~\ref{sec:switching-complement}.

\section{Vertex Seidel Energy}
\label{sec:seidel-vertex-energy}
In this section we introduce the central object of the paper, record its basic structural properties, and compute it in several standard cases.

\begin{definition}
Let \(G\) be a simple graph with Seidel matrix \(S=S(G)\). For a vertex \(v_i\) the \emph{vertex Seidel energy} is
\[
\mathcal{E}_S(v_i):=\bigl|S(G)\bigr|_{ii},
\]
where \(|S|=(S^2)^{1/2}\). Equivalently, if \(S=W\operatorname{diag}(\theta_1,\dots,\theta_n)W^\top\) with orthonormal \(W=[w_{ij}]\), then
\[
\mathcal{E}_S(v_i)=\sum_{j=1}^n w_{ij}^2\,|\theta_j|.
\]
\end{definition} 
\noindent The next proposition collects immediate but useful structural facts.
\begin{proposition}
\label{prop:partition}
With notation as above the total Seidel energy decomposes as
\[
\mathcal{E}_S(G)=\operatorname{tr}|S|=\sum_{i=1}^n\mathcal{E}_S(v_i).
\]
\end{proposition}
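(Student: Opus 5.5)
The plan is to prove the two equalities separately, working directly from the spectral decomposition in the definition. The first equality, \(\mathcal{E}_S(G)=\operatorname{tr}|S|\), is handled by diagonalization. Write \(S=W\operatorname{diag}(\theta_1,\dots,\theta_n)W^\top\) with \(W\) orthonormal. Since \(S\) is real symmetric, \(S^2=W\operatorname{diag}(\theta_1^2,\dots,\theta_n^2)W^\top\). Its unique positive semidefinite square root is therefore \(|S|=W\operatorname{diag}(|\theta_1|,\dots,|\theta_n|)W^\top\). The trace is invariant under similarity, so \(\operatorname{tr}|S|=\operatorname{tr}\operatorname{diag}(|\theta_j|)=\sum_j|\theta_j|=\mathcal{E}_S(G)\).

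The second equality, \(\operatorname{tr}|S|=\sum_i\mathcal{E}_S(v_i)\), is essentially immediate. The trace is the sum of the diagonal entries, and \(|S|_{ii}=\mathcal{E}_S(v_i)\) by definition. As a cross-check with the eigenvector form of the definition, I would also verify it through coordinates:
\[
\sum_{i=1}^n\sum_{j=1}^n w_{ij}^2|\theta_j|=\sum_{j=1}^n|\theta_j|\sum_{i=1}^n w_{ij}^2=\sum_{j=1}^n|\theta_j|.
\]
This uses that each column of the orthonormal matrix \(W\) has unit norm.

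The only point needing a moment of care is whether \(|S|\) is well defined independently of the chosen orthonormal eigenbasis, since repeated eigenvalues allow many choices of \(W\). This follows from uniqueness of the positive semidefinite square root of \(S^2\). Alternatively, one can write \(|S|=\sum_\theta |\theta|P_\theta\) with \(P_\theta\) the spectral projections, which do not depend on the basis. With that settled, nothing else in the argument is an obstacle.
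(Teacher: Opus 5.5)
Your proposal is correct and follows the paper's proof: the paper also treats $\operatorname{tr}|S|=\sum_i |S|_{ii}$ as tautological and confirms it by summing $\sum_j w_{ij}^2|\theta_j|$ over $i$, using $\sum_i w_{ij}^2=1$. Your explicit derivation of $\operatorname{tr}|S|=\sum_j|\theta_j|$ via trace invariance, and your remark that $|S|$ does not depend on the chosen eigenbasis, are sound details that the paper leaves implicit.
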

\begin{proof}
This is the tautological identity \(\operatorname{tr}|S|=\sum_i |S|_{ii}\). Equivalently, summing the spectral representation \(\mathcal{E}_S(v_i)=\sum_j w_{ij}^2|\theta_j|\) over \(i\) and using \(\sum_{i}w_{ij}^2=1\) yields \(\sum_i\mathcal{E}_S(v_i)=\sum_j|\theta_j|=\mathcal{E}_S(G)\).
\end{proof}
\begin{proposition}
\label{prop:two-abs-eigs}
Let the Seidel spectrum of \(G\) has absolute values taking exactly two distinct values \(a>b\ge0\); equivalently the spectrum is contained in \(\{\pm a,\pm b\}\) (with multiplicities). Then for every vertex \(v_i\),
\begin{equation}
\label{eq:two-abs-formula}
\mathcal{E}_S(v_i) \;=\; b + (a-b)\,\frac{n-1-b^2}{a^2-b^2}.
\end{equation}
\end{proposition}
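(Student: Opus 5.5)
The plan is to express $|S|$ as a polynomial in $S^2$. The spectrum of $S^2$ is contained in $\{a^2,b^2\}$, and $a^2\neq b^2$ because $a>b\ge 0$. So we can write down the unique affine function $f(x)=\alpha+\beta x$ with $f(a^2)=a$ and $f(b^2)=b$. Solving gives
\[
\beta=\frac{a-b}{a^2-b^2}=\frac{1}{a+b},\qquad \alpha=b-\beta b^2 .
\]
Since $|S|=W\operatorname{diag}(|\theta_1|,\dots,|\theta_n|)W^\top$ and $|\theta_j|=f(\theta_j^2)$ for every $j$, we get the matrix identity
\[
|S| = \alpha I+\beta S^2 = bI+\frac{a-b}{a^2-b^2}\,\bigl(S^2-b^2 I\bigr).
\]
Equivalently, in the spectral form of the definition, one could write $\mathcal{E}_S(v_i)=\sum_j w_{ij}^2|\theta_j|$ and split the sum according to whether $|\theta_j|=a$ or $|\theta_j|=b$. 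The polynomial identity packages this more cleanly.

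Next we take the $(i,i)$ entry. The only input needed is $(S^2)_{ii}=\sum_{k} s_{ik}^2$. Every off-diagonal entry of $S$ is $\pm1$ and the diagonal is $0$, so $(S^2)_{ii}=n-1$ for every vertex, independently of the graph. Substituting gives
\[
\mathcal{E}_S(v_i)=b+(a-b)\frac{n-1-b^2}{a^2-b^2},
\]
which is \eqref{eq:two-abs-formula}.

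There is no real obstacle here. The only point needing care is the hypothesis: the statement requires the absolute values to take exactly two distinct values with $a>b$, which guarantees $a^2\ne b^2$ so the interpolation is well defined. This also covers the case $b=0$, where the formula reduces to $(n-1)/a$. If the spectrum had only one absolute value, the formula would still hold formally with any choice of the unused value, but that case is excluded by the statement. I would also remark that the result implies the vertex Seidel energy is constant over vertices, so by Proposition~\ref{prop:partition} it equals $\mathcal{E}_S(G)/n$. This gives a consistency check: using $\operatorname{tr}S^2=n(n-1)$, the multiplicities of $a$ and $b$ reproduce the same value.
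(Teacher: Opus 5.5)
Your proof is correct and is essentially the paper's argument. The paper writes $|S| = a\Pi_a + b(I-\Pi_a)$ with the spectral projector $\Pi_a=(S^2-b^2I)/(a^2-b^2)$, which is exactly your interpolation identity $|S| = bI + \frac{a-b}{a^2-b^2}(S^2-b^2I)$; both arguments then finish by taking the $(i,i)$ entry and using $(S^2)_{ii}=n-1$.
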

\begin{proof}
Let \(\Pi_a\) denote the spectral projector onto the eigenspaces of \(S\) with \(|\theta|=a\). Since \(S^2\) has only the eigenvalues \(a^2\) and \(b^2\), the projector \(\Pi_a\) is a polynomial in \(S^2\); one convenient form is
\[
\Pi_a=\frac{S^2-b^2 I}{a^2-b^2}.
\]
Define $p_{ij}:=w_{ij}^2,\; 1\le i,j\le n.$ Then for every vertex $v_i$,
\[
\mathcal{E}_S(v_i)=\sum_{j=1}^n w_{ij}^2\,|\theta_j|=\sum_{j} p_{ij}|\theta_j|
= a\sum_{j:|\theta_j|=a} p_{ij} + b\sum_{j:|\theta_j|=b} p_{ij}.
\]
Set $q_i=\sum_{j:|\theta_j|=a} p_{ij}= (\Pi_a)_{ii}$. Since $\sum_j p_{ij}=1$ we obtain
\[
\mathcal{E}_S(v_i)=a q_i + b(1-q_i) = b + (a-b)q_i.
\]
Using the projector expression,
\[
q_i=(\Pi_a)_{ii}=\frac{[S^2]_{ii}-b^2}{a^2-b^2},
\]
Finally note that $[S^2]_{ii}=\sum_j S_{ij}^2=n-1$ for every vertex $v_i$, 
\[
\mathcal{E}_S(v_i)
= b + (a-b)\,\frac{n-1-b^2}{a^2-b^2}.
\]
\end{proof}
\noindent The two-absolute-eigenvalue formula immediately gives a number of concrete corollaries.
\begin{corollary}
\label{cor:Kn}
For the complete graph \(K_n\) (with \(n\ge3\)) every vertex has
\[
\mathcal{E}_S(v_i)=\frac{2(n-1)}{n},\qquad i=1,\dots,n.
\]
\end{corollary}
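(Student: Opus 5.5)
For $K_n$ we have $A(K_n)=J-I$, so $S(K_n)=J-I-2(J-I)=I-J$. Since $J$ has eigenvalues $n$ (once, eigenvector $\mathbf 1$) and $0$ (with multiplicity $n-1$), the Seidel spectrum of $K_n$ is $\{1-n,\;1^{(n-1)}\}$. The absolute values therefore take exactly the two values $a=n-1$ and $b=1$, and $a>b$ holds precisely when $n\ge3$. This is why the hypothesis $n\ge 3$ appears: for $n=2$ the two absolute values coincide and Proposition~\ref{prop:two-abs-eigs} does not apply.

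We then substitute into \eqref{eq:two-abs-formula}. We have $n-1-b^2=n-2$ and $a^2-b^2=(n-1)^2-1=n(n-2)$, so the quotient equals $1/n$. This gives
\[
\mathcal{E}_S(v_i)=1+(n-2)\cdot\frac{1}{n}=\frac{2(n-1)}{n}.
\]

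As an independent check, we can compute $|S|$ directly, avoiding Proposition~\ref{prop:two-abs-eigs}. With $P=\frac1nJ$ the projector onto $\mathbf 1$, we have $S=(I-P)-(n-1)P$. Hence $|S|=(I-P)+(n-1)P=I+(n-2)P$, and its diagonal entry is $1+\frac{n-2}{n}$, which agrees with the formula. Consistently with Proposition~\ref{prop:partition}, summing over all $n$ vertices gives $2(n-1)$, the known Seidel energy of $K_n$.

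The argument has no real obstacle; the only point requiring care is verifying the hypothesis $a>b$, which is where $n\ge3$ is used.
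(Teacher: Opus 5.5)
Your proposal is correct and follows the paper's proof: compute the Seidel spectrum $\{1-n,\,1^{(n-1)}\}$, take $a=n-1$, $b=1$ in Proposition~\ref{prop:two-abs-eigs}, and simplify to $\frac{2(n-1)}{n}$. Your direct check via $|S|=I+\frac{n-2}{n}J$ and your observation that $n\ge3$ is exactly what guarantees $a>b$ are both correct additions.
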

\begin{proof}
The Seidel spectrum of the complete graph \(K_n\) is \(1-n\) and \(1\) (with multiplicity \(n-1\)). Thus the two distinct absolute eigenvalues are \(a=n-1\) and \(b=1\). Using  Proposition \ref{prop:two-abs-eigs} we get
\[
\mathcal{E}_S(v_i)
=1+(n-2)\frac{n-2}{(n-1)^2-1}= \frac{2(n-1)}{n},\qquad i=1,\dots,n.
\]

\end{proof}
\begin{corollary}
\label{cor:Krs}
For complete bipartite graph \(G=K_{r,s}\) every vertex has
\[
\mathcal{E}_S(v_i)=\frac{2(r+s-1)}{r+s},\qquad i=1,\dots,n.
\]
\end{corollary}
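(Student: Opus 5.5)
The plan is to reduce to the complete graph case via a signature similarity, and then apply Proposition~\ref{prop:two-abs-eigs} exactly as in the proof of the Corollary for $K_n$. Let $n=r+s$, and order the vertices so that the $r$ vertices of the first part come first. Put $D=\operatorname{diag}(I_r,-I_s)$.

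\textbf{Step 1: write $S(K_{r,s})$ as a signed conjugate of $J-I$.} For two vertices in the same part, $s_{ij}=1$ because they are non-adjacent. For two vertices in different parts, $s_{ij}=-1$ because they are adjacent. Hence
\[
S(K_{r,s})=D(J_n-I_n)D .
\]
In the language of Section~\ref{sec:switching-complement}, $K_{r,s}$ is the Seidel switch of the empty graph $\overline{K_n}$ with respect to one part. Since $D$ is orthogonal and $D=D^{-1}$, the matrix $S(K_{r,s})$ has the same spectrum as $J-I$, namely $n-1$ with multiplicity one and $-1$ with multiplicity $n-1$.

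\textbf{Step 2: apply Proposition~\ref{prop:two-abs-eigs}.} For $n\ge3$ the absolute values of the spectrum take exactly two distinct values, $a=n-1$ and $b=1$. Substituting into \eqref{eq:two-abs-formula} gives the same computation as for $K_n$:
\[
1+(n-2)\frac{n-2}{(n-1)^2-1}=\frac{2(n-1)}{n}=\frac{2(r+s-1)}{r+s}.
\]
The same value can be seen directly: $|S(K_{r,s})|=D\,|J-I|\,D$, and conjugation by a diagonal $\pm1$ matrix does not change diagonal entries. So $\mathcal{E}_S(v_i;K_{r,s})=\mathcal{E}_S(v_i;\overline{K_n})$. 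Moreover $S(\overline{K_n})=-S(K_n)$, so $|S(\overline{K_n})|=|S(K_n)|$, and the value follows from the Corollary for $K_n$.

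\textbf{Step 3: the degenerate case $n=2$.} The only real obstacle is bookkeeping, since Proposition~\ref{prop:two-abs-eigs} requires two distinct absolute values. When $r=s=1$ we have $a=b=1$, so the proposition does not apply. In this case I handle things directly: the spectrum of $S$ is $\pm1$, so $|S|=I$, and every vertex has energy $1=2(2-1)/2$, which agrees with the formula. This completes the plan.
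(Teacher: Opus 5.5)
Your proposal is correct and takes essentially the same route as the paper: you identify the Seidel spectrum of $K_{r,s}$ as $n-1$ (simple) and $-1$ (multiplicity $n-1$), then substitute $a=n-1$, $b=1$ into Proposition~\ref{prop:two-abs-eigs}. Deriving that spectrum from $S(K_{r,s})=D(J-I)D$, which the paper only asserts, and treating $r=s=1$ separately, where the paper's computation becomes $0/0$, are worthwhile refinements.
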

\begin{proof}
Let \(p=r+s\). The Seidel spectrum of \(K_{r,s}\) is \(p-1\) and \(-1\) (with multiplicity \(p-1\). Hence the two absolute eigenvalues are \(a=p-1\) and \(b=1\). Applying Proposition \ref{prop:two-abs-eigs} gives
\[
\mathcal{E}_S(v_i)
=1+(p-2)\frac{p-2}{(p-1)^2-1}
=\frac{2(p-1)}{p}=\frac{2(r+s-1)}{r+s},\qquad i=1,\dots,n.
\]
\end{proof}
\begin{corollary}
\label{cor:conference-compact}
Let $G$ be a conference graph of order $v$. Then every vertex satisfies
\[
\mathcal{E}_S(v_i)=\frac{v-1}{\sqrt{v}}\qquad i=1,\dots,n.
\]
\end{corollary}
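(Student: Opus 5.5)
The plan is to feed the Seidel spectrum of a conference graph into Proposition~\ref{prop:two-abs-eigs}. First I will compute that spectrum.

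A conference graph $G$ is strongly regular with parameters $(v,k,\lambda,\mu)=\bigl(v,\tfrac{v-1}{2},\tfrac{v-5}{4},\tfrac{v-1}{4}\bigr)$. Its adjacency eigenvalues are $k=\tfrac{v-1}{2}$ (on the all-ones vector $\mathbf{1}$) and $\tfrac{-1\pm\sqrt{v}}{2}$ (on $\mathbf{1}^\perp$), since $\lambda-\mu=-1$ and $(\lambda-\mu)^2+4(k-\mu)=v$.

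Now use $S=J-I-2A$. On $\mathbf{1}$ this gives
\[
v-1-2k=0 .
\]
On $\mathbf{1}^\perp$, where $J$ acts as zero, it gives
\[
-1-2\cdot\tfrac{-1\pm\sqrt v}{2}=\mp\sqrt v .
\]
So the Seidel spectrum is $\{0,\pm\sqrt v\}$. The absolute values take exactly two distinct values, $a=\sqrt v$ and $b=0$, so the hypothesis of Proposition~\ref{prop:two-abs-eigs} is met. For the proposition I only need that the spectrum lies in $\{\pm a,\pm b\}$; the multiplicities are irrelevant.

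Applying \eqref{eq:two-abs-formula} with $n=v$ gives
\[
\mathcal{E}_S(v_i)=0+\sqrt v\cdot\frac{v-1-0}{v-0}=\frac{v-1}{\sqrt v}.
\]
This is the claimed value, and it is the same for every vertex.

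As a cross-check, multiplying by $v$ should give $\mathcal{E}_S(G)=(v-1)\sqrt v$. This agrees with the fact that the $v-1$ nonzero Seidel eigenvalues all have modulus $\sqrt v$.

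The only step needing care is getting the adjacency eigenvalues of the conference graph right and noting that the Perron eigenvalue maps exactly to $0$. That is routine, so I expect no real obstacle.
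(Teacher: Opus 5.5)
Your proposal is correct and follows the paper's proof essentially step for step. Like the paper, you obtain the adjacency spectrum $\frac{v-1}{2}$, $\frac{-1\pm\sqrt v}{2}$, translate it via $S=J-I-2A$ to the Seidel spectrum $\{0,\pm\sqrt v\}$, and then apply Proposition~\ref{prop:two-abs-eigs} with $a=\sqrt v$, $b=0$. Your extra remarks are valid: you derive the eigenvalues from $\lambda-\mu=-1$ and $k-\mu=\frac{v-1}{4}$, and you note that the multiplicities are not needed for the proposition.
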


\begin{proof}
For a conference graph the adjacency spectrum is
\[
k=\frac{v-1}{2},\qquad
\lambda_{\pm}=\frac{-1\pm\sqrt{v}}{2}\quad(\text{each of multiplicity }(v-1)/2).
\]
Since $S=J-I-2A$, the all-ones vector $\mathbf{1}$ is an eigenvector of $S$ corresponding to the eigenvalue $0$ as:
\[
S\mathbf{1}=(v-1-2k)\mathbf{1}=0,
\]
and any eigenvector of $A$ orthogonal to $\mathbf{1}$ corresponding to the eigenvalue $\lambda$ yields an eigenvector of $S$ corresponding to the eigenvalue $-1-2\lambda$. Applying this to $\lambda=\lambda_{\pm}$ gives the Seidel spectrum $\mp\sqrt{v}$, each with multiplicity $(v-1)/2$, together with the simple eigenvalue $0$. 
Taking $a=\sqrt{v}$ and $b=0$ in Proposition~\ref{prop:two-abs-eigs} yields
\[
\mathcal{E}_S(v_i)
= 0 + (a-0)\,\frac{v-1-0}{a^2-0}
= \frac{v-1}{\sqrt{v}},\qquad i=1,\dots,n.
\]
\noindent (The same formula applies to Paley graphs \(P_q\) for \(q\equiv1\pmod4\).)
\end{proof}

\section{Bounds for Vertex Seidel Energy}
\label{sec:bounds}
In this section we establish several fundamental bounds that constrain the possible values of the vertex Seidel energies.
\begin{theorem}
\label{thm:cs-upper}
Let \(G\) be a graph of order \(n\). Then for every vertex \(v_i\),
\[
\mathcal{E}_S(v_i)\le\sqrt{n-1}.
\]
Moreover, equality holds for every vertex \(v_i\) if and only if
\[
S^2=(n-1)I_n.
\]
\end{theorem}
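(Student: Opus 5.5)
The natural route is Cauchy--Schwarz, or equivalently Jensen, applied to the spectral representation of the vertex energy. Fix a vertex \(v_i\) and write \(p_{ij}=w_{ij}^2\). As in the proof of Proposition~\ref{prop:two-abs-eigs}, the \(p_{ij}\) form a probability vector, since \(\sum_j p_{ij}=1\). The same proof gives \(\sum_j p_{ij}\theta_j^2=[S^2]_{ii}=n-1\), because every off-diagonal entry of \(S\) is \(\pm1\) and the diagonal is zero. Then
\[
\mathcal{E}_S(v_i)=\sum_j p_{ij}|\theta_j|\le\Big(\sum_j p_{ij}\Big)^{1/2}\Big(\sum_j p_{ij}\theta_j^2\Big)^{1/2}=\sqrt{n-1}.
\]
Here Cauchy--Schwarz is applied to the vectors \((\sqrt{p_{ij}})_j\) and \((\sqrt{p_{ij}}\,|\theta_j|)_j\). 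A matrix-level version of the same argument is \(|S|_{ii}=\langle e_i,|S|e_i\rangle\le \||S|e_i\|\,\|e_i\|=\sqrt{(|S|^2)_{ii}}=\sqrt{(S^2)_{ii}}=\sqrt{n-1}\). Either form proves the inequality.

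For the equality case, equality in Cauchy--Schwarz means \(|\theta_j|\) is constant on the support \(\{j: p_{ij}>0\}\). In the matrix form it means \(|S|e_i\) is a multiple of \(e_i\), and comparing the \(i\)-th entries gives \(|S|e_i=\sqrt{n-1}\,e_i\).

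\emph{Sufficiency.} If \(S^2=(n-1)I\), then \(|S|=(S^2)^{1/2}=\sqrt{n-1}\,I\), so every diagonal entry equals \(\sqrt{n-1}\). Alternatively, all \(|\theta_j|=\sqrt{n-1}\), and Proposition~\ref{prop:two-abs-eigs} in its degenerate form, or a direct computation, gives the value.

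\emph{Necessity.} Suppose equality holds at every vertex. By the matrix argument, \(|S|e_i=\sqrt{n-1}\,e_i\) for all \(i\), so \(|S|=\sqrt{n-1}\,I\). Squaring gives \(S^2=|S|^2=(n-1)I\). Spectrally, this says every eigenvalue index \(j\) lies in the support of some row \(i\), because each column of the orthogonal matrix \(W\) is nonzero. Hence every \(|\theta_j|\) equals \(\sqrt{n-1}\), and therefore \(S^2=W\operatorname{diag}(\theta_j^2)W^\top=(n-1)I\).

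The only delicate step is this necessity direction. Equality at a single vertex only forces the eigenvalues in that vertex's support to have modulus \(\sqrt{n-1}\). One therefore needs equality at \emph{every} vertex, together with the fact that the supports cover all indices, to reach the global conclusion \(S^2=(n-1)I\). The matrix-form argument sidesteps this bookkeeping, so I would present that version.
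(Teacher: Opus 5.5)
Your proposal is correct and follows essentially the same route as the paper. Both apply Cauchy--Schwarz to \(|S|e_i\) and \(e_i\) with \([S^2]_{ii}=n-1\), and for necessity both observe that equality at every vertex forces \(|S|e_i\) to be a multiple of \(e_i\), so \(|S|=\sqrt{n-1}\,I_n\) and \(S^2=(n-1)I_n\). The only cosmetic difference is that you read off the scalar directly as \(|S|_{ii}=\sqrt{n-1}\), whereas the paper derives \(\lambda_i^2=n-1\) and uses positive semidefiniteness to choose the positive root.
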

\begin{proof}
Set \(M:=|S|=(S^2)^{1/2}\). The matrix \(M\) is symmetric and positive semidefinite, so for each standard basis vector \(e_i\) we may apply the Cauchy--Schwarz inequality in \(\mathbb R^n\) to the vectors \(Me_i\) and \(e_i\):
\[
\bigl\langle Me_i,e_i\bigr\rangle^2 \le \langle Me_i,Me_i\rangle\langle e_i,e_i\rangle.
\]
The left-hand side equals \(\varphi_i(M)^2=\bigl(|S|\bigr)_{ii}^2=\mathcal{E}_S(v_i)^2\). The right-hand side equals \(e_i^\top M^2 e_i \cdot 1 = e_i^\top S^2 e_i = [S^2]_{ii}=n-1\). Therefore
\[
\mathcal{E}_S(v_i)^2 \le n-1.
\]
Suppose equality holds for every \(i\). Then for each \(i\) we have \(\mathcal{E}_S(v_i)^2 = [S^2]_{ii}\), i.e. $\langle Me_i,e_i\rangle^2 = \langle Me_i,Me_i\rangle\langle e_i,e_i\rangle.$
By the equality condition in Cauchy--Schwarz, the vectors \(Me_i\) and \(e_i\) are linearly dependent; hence there exists \(\lambda_i\in\mathbb R\) with \(Me_i=\lambda_i e_i\). Thus \(M\) is diagonal in the standard basis and \(M=\operatorname{diag}(\lambda_1,\dots,\lambda_n)\). Then
\[
[S^2]_{ii} = e_i^\top M^2 e_i = \lambda_i^2 = n-1,
\]
so \(\lambda_i=\sqrt{n-1}\) (since \(M\) is positive semidefinite). Therefore \(M=\sqrt{n-1}\,I_n\), equivalently \(|S|=\sqrt{n-1}\,I_n\). Squaring gives \(S^2=(n-1)I_n\). Conversely, if \(S^2=(n-1)I_n\) then \(|S|^2=S^2=(n-1)I_n\), whence \(|S|=\sqrt{n-1}\,I_n\) and \(\mathcal{E}_S(v_i)=\sqrt{n-1}\) for every \(i\). 
\end{proof}
\begin{remark}
The algebraic equality condition is \(S^2=(n-1)I\). Any graph whose Seidel matrix satisfies this matrix equation attains \(\mathcal{E}_S(v_i)=\sqrt{n-1}\) at every vertex. Such Seidel matrices are precisely symmetric \((0,\pm1)\)-matrices with zero diagonal and squared equal to \((n-1)I\); they are often called \emph{symmetric conference matrices} in the literature \cite{seidel1976twographs}. We do not assert existence for all \(n\) here; instead we note that when such matrices exist they provide sharpness for Theorem \ref{thm:cs-upper}.
\end{remark}

\paragraph{Equality examples.}
The two small graphs below illustrate the equality case \(S^2=(n-1)I_n\). In each example the displayed Seidel matrix satisfies \(S^2=(n-1)I_n\), hence \(|S|=\sqrt{n-1}\,I_n\) and every vertex attains the per-vertex Seidel energy \(\sqrt{n-1}\). The numerical vertex energies are shown inside the nodes.
\FloatBarrier
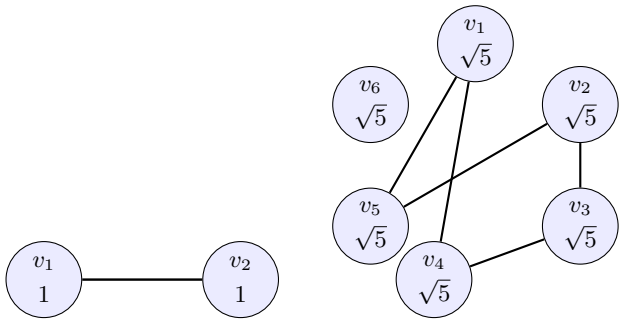
\begin{figure}[htbp]
  \centering
  \renewcommand{\arraystretch}{1.2}
  \begin{tabular}{c@{\qquad}c}
    % -------- K2 --------
    \begin{tikzpicture}[scale=1, every node/.style={font=\small}]
      % styling
      \tikzset{vtx/.style={circle,draw,fill=blue!8,minimum size=10mm,inner sep=0pt,align=center}}
      % nodes
      \node[vtx] (a) at (0,0) {$v_1$ \\[2pt] \(\;1\;\)};
      \node[vtx] (b) at (2.6,0) {$v_2$ \\[2pt] \(\;1\;\)};
      % edge
      \draw[line width=0.9pt] (a) -- (b);
    \end{tikzpicture}
    &
    % -------- Order-6 example --------
    \begin{tikzpicture}[scale=1, every node/.style={font=\small}]
      \tikzset{vtx/.style={circle,draw,fill=blue!8,minimum size=10mm,inner sep=0pt,align=center}}
      % place six vertices on a circle
      \foreach \i/\ang/\name in {
        1/90/v_1,
        2/30/v_2,
        3/330/v_3,
        4/250/v_4,
        5/210/v_5,
        6/150/v_6}
      {
        \node[vtx] (\name) at (\ang:1.6cm) {\(\name\)\\[2pt] \(\sqrt{5}\)};
      }
      % example edge set (one symmetric-realization achieving S^2=5I)
      \draw[line width=0.8pt] (v_1) -- (v_4);
      \draw[line width=0.8pt] (v_1) -- (v_5);
      \draw[line width=0.8pt] (v_2) -- (v_3);
      \draw[line width=0.8pt] (v_2) -- (v_5);
      \draw[line width=0.8pt] (v_3) -- (v_4);
      % optional light circle for layout
      % \draw[dashed,gray] (0,0) circle (1.6cm);
    \end{tikzpicture}
  \end{tabular}
  \caption{Left: \(K_2\) with vertex Seidel energy \(\sqrt{1}=1\). 
  Right: Conference type example of order \(6\) with vertex  Seidel energy \(\sqrt{5}\).}
  \label{fig:examples}
\end{figure}
\FloatBarrier
\begin{theorem}
\label{thm:holder-lower}
Let \(G\) be a graph of order \(n\ge2\). Then for every vertex \(v_{i}\),
\[
\mathcal{E}_S(v_i)\;\ge\;\frac{(n-1)^{3/2}}{\sqrt{[S^4]_{ii}}}.
\]
\end{theorem}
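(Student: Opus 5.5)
We plan to work with the spectral weights at vertex $v_i$. Write $S=W\operatorname{diag}(\theta_1,\dots,\theta_n)W^\top$ and set $p_j:=w_{ij}^2\ge0$, so that $\sum_j p_j=1$. Then the three relevant quantities are
\[
\mathcal{E}_S(v_i)=\sum_j p_j|\theta_j|,\qquad [S^2]_{ii}=\sum_j p_j\theta_j^2=n-1,\qquad [S^4]_{ii}=\sum_j p_j\theta_j^4 .
\]
The identity $[S^2]_{ii}=n-1$ was already used in Proposition~\ref{prop:two-abs-eigs} and Theorem~\ref{thm:cs-upper}. It holds because every off-diagonal entry of $S$ is $\pm1$ and every diagonal entry is $0$.

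The key step is a Hölder (or Cauchy--Schwarz) interpolation of the second moment between the first and fourth moments of the discrete measure with weights $p_j$ on the values $|\theta_j|$. We split $\theta_j^2=|\theta_j|^{1/2}\cdot|\theta_j|^{3/2}$ and apply Cauchy--Schwarz with weights $p_j$:
\[
\sum_j p_j\theta_j^2=\sum_j \bigl(p_j^{1/2}|\theta_j|^{1/2}\bigr)\bigl(p_j^{1/2}|\theta_j|^{3/2}\bigr)\le\Bigl(\sum_j p_j|\theta_j|\Bigr)^{1/2}\Bigl(\sum_j p_j|\theta_j|^3\Bigr)^{1/2}.
\]
This alone brings in a third moment, which is not what we want. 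So we instead use the split $\theta_j^2=|\theta_j|^{2/3}\cdot|\theta_j|^{4/3}$ together with Hölder's inequality with exponents $3/2$ and $3$:
\[
\sum_j p_j\theta_j^2\le\Bigl(\sum_j p_j|\theta_j|\Bigr)^{2/3}\Bigl(\sum_j p_j\theta_j^4\Bigr)^{1/3}.
\]
This gives $(n-1)\le \mathcal{E}_S(v_i)^{2/3}\,[S^4]_{ii}^{1/3}$. Cubing yields $(n-1)^3\le \mathcal{E}_S(v_i)^2[S^4]_{ii}$, which is exactly the claimed bound after taking square roots.

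We also need to check that the bound makes sense, that is, that $[S^4]_{ii}>0$. This holds because $[S^4]_{ii}=\|S^2e_i\|^2\ge([S^2]_{ii})^2=(n-1)^2>0$ when $n\ge2$.

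We expect the only real obstacle to be choosing the right exponents in the Hölder step. It may be cleaner to phrase this step matrix-theoretically: $S^2=|S|^{2/3}|S|^{4/3}$, and one can apply Hölder to the vectors $|S|^{1/3}e_i$ and $|S|^{5/3}e_i$. However, the scalar weighted form above is the most transparent version and is the one we would write out.
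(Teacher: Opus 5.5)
Your proposal is correct and matches the paper's proof: the paper also uses the spectral weights $p_j=w_{ij}^2$ and Hölder with exponents $3/2$ and $3$ on the split $p_j a_j^2=(p_j a_j)^{2/3}(p_j a_j^4)^{1/3}$, together with $[S^2]_{ii}=n-1$. Your extra check that $[S^4]_{ii}\ge (n-1)^2>0$ is a small addition the paper omits; it makes the final division legitimate.
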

\begin{proof}
Let \(\theta_1,\dots,\theta_n\) be the Seidel eigenvalues and choose an orthonormal eigenbasis so that \(S=W\operatorname{diag}(\theta_1,\dots,\theta_n)W^\top\) with \(W=[w_{ij}]\). Set $a_j=|\theta_j|\ge0,\; p_j=w_{ij}^2\ge0,$ so that \(\sum_{j=1}^n p_j=1\). Then
\[
\mathcal{E}_S(v_i)=\sum_{j=1}^n p_j a_j,\qquad
[S^2]_{ii}=\sum_{j=1}^n p_j a_j^2,\qquad
[S^4]_{ii}=\sum_{j=1}^n p_j a_j^4.
\]
Observe the identity
\[
\sum_{j=1}^n p_j a_j^2 \;=\; \sum_{j=1}^n (p_j a_j)^{2/3}\,(p_j a_j^4)^{1/3}.
\]
Apply Hölder's inequality with exponents \(p=3/2\) and \(q=3\) to the nonnegative sequences \(x_j:=(p_j a_j)^{2/3}\) and \(y_j:=(p_j a_j^4)^{1/3}\). Hölder gives
\[
\sum_{j=1}^n x_j y_j
\le\Big(\sum_{j=1}^n x_j^{3/2}\Big)^{2/3}\Big(\sum_{j=1}^n y_j^3\Big)^{1/3}.
\]
Computing the sums,
\[
\sum_{j=1}^n x_j^{3/2}=\sum_{j=1}^n (p_j a_j)^{(2/3)\cdot(3/2)}=\sum_{j=1}^n p_j a_j=\mathcal{E}_S(v_i),
\]
\[
\sum_{j=1}^n y_j^3=\sum_{j=1}^n (p_j a_j^4)^{(1/3)\cdot 3}=\sum_{j=1}^n p_j a_j^4=[S^4]_{ii}.
\]
Therefore
\[
\sum_{j=1}^n p_j a_j^2
\le \mathcal{E}_S(v_i)^{2/3}\,[S^4]_{ii}^{1/3}.
\]
Rearranging yields
\[
\mathcal{E}_S(v_i)\ge\frac{\big(\sum_{j=1}^n p_j a_j^2\big)^{3/2}}{\big(\sum_{j=1}^n p_j a_j^4\big)^{1/2}}
= \frac{[S^2]_{ii}^{3/2}}{\sqrt{[S^4]_{ii}}}=\frac{(n-1)^{3/2}}{\sqrt{[S^4]_{ii}}}
\]
\end{proof}

\section{When is the vertex Seidel energy constant?}
\label{ssec:constant-energy}
The concrete families discussed above—complete, complete bipartite, and Paley/conference graphs—share the feature that every vertex has the same Seidel energy. This prompts the natural question: which structural properties force this constancy? Crucially, regularity alone does not.

For example, the non-regular conference-type graph of order~6 in Figure~\ref{thm:holder-lower} has identical vertex Seidel energies despite unequal degrees. Conversely, a small perturbation of the Petersen graph yields a connected $3$-regular graph whose vertex Seidel energies are not all equal: here the degree sequence is uniform but the eigenvector-weight distribution is not, so the diagonal entries of $|S|$ differ. These examples show that constancy is a spectral phenomenon rather than a mere consequence of degree regularity.
\FloatBarrier
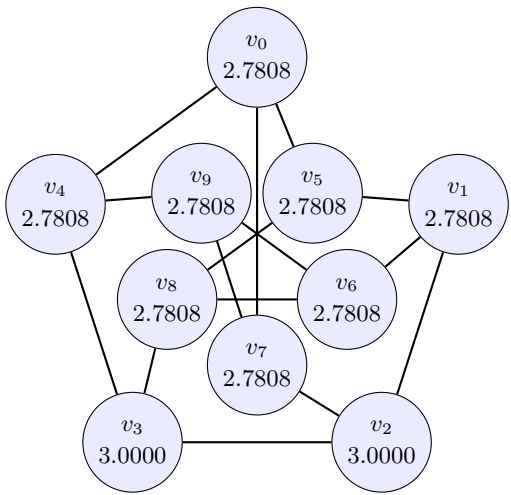
\begin{figure}[htbp]
  \centering
  \begin{tikzpicture}[scale=1, every node/.style={font=\small}]
    % node style (compact, allows line breaks)
    \tikzset{vtx/.style={circle,draw,fill=blue!8,minimum size=9mm,inner sep=1pt,align=center}}

    % radii chosen to avoid overlap (tested spacing)
    \def\rout{2.8cm}   % outer pentagon radius
    \def\rin{1.25cm}   % inner star radius

    % Outer pentagon vertices (angles in degrees, 72-degree spacing)
    \node[vtx] (v0) at (90:\rout)  {\(v_0\)\\[1pt]\(\;2.7808\;\)};
    \node[vtx] (v1) at (18:\rout)  {\(v_1\)\\[1pt]\(\;2.7808\;\)};
    \node[vtx] (v2) at (-54:\rout) {\(v_2\)\\[1pt]\(\;3.0000\;\)};
    \node[vtx] (v3) at (-126:\rout){\(v_3\)\\[1pt]\(\;3.0000\;\)};
    \node[vtx] (v4) at (-198:\rout){\(v_4\)\\[1pt]\(\;2.7808\;\)};

    % Inner star vertices (angles offset to sit between outer vertices)
    \node[vtx] (v5) at (54:\rin)   {\(v_5\)\\[1pt]\(\;2.7808\;\)};
    \node[vtx] (v6) at (-18:\rin)  {\(v_6\)\\[1pt]\(\;2.7808\;\)};
    \node[vtx] (v7) at (-90:\rin)  {\(v_7\)\\[1pt]\(\;2.7808\;\)};
    \node[vtx] (v8) at (-162:\rin) {\(v_8\)\\[1pt]\(\;2.7808\;\)};
    \node[vtx] (v9) at (-234:\rin) {\(v_9\)\\[1pt]\(\;2.7808\;\)};

    % --- edges for the modified Petersen (2-edge swap) ---

    % Outer cycle edges: omit (v0)-(v1) as per modification
    \draw[line width=0.8pt] (v1) -- (v2);
    \draw[line width=0.8pt] (v2) -- (v3);
    \draw[line width=0.8pt] (v3) -- (v4);
    \draw[line width=0.8pt] (v4) -- (v0);

    % Inner star edges: originally (v5-v7),(v7-v9),(v9-v6),(v6-v8),(v8-v5)
    % remove (v5)-(v7) per modification
    \draw[line width=0.8pt] (v7) -- (v9);
    \draw[line width=0.8pt] (v9) -- (v6);
    \draw[line width=0.8pt] (v6) -- (v8);
    \draw[line width=0.8pt] (v8) -- (v5);

    % Spokes (outer to corresponding inner)
    \draw[line width=0.8pt] (v0) -- (v5);
    \draw[line width=0.8pt] (v1) -- (v6);
    \draw[line width=0.8pt] (v2) -- (v7);
    \draw[line width=0.8pt] (v3) -- (v8);
    \draw[line width=0.8pt] (v4) -- (v9);

    % Added edges (the 2-edge swap)
    \draw[line width=0.8pt] (v0) -- (v7); % added
    \draw[line width=0.8pt] (v1) -- (v5); % added
  \end{tikzpicture}
  \caption{A connected 3-regular graph (modified Petersen) with non-constant vertex Seidel energy.}
  \label{fig:modified-petersen-compact}
\end{figure}
\FloatBarrier
Below are three (non-exhaustive) structural mechanisms that force constant vertex Seidel energy. A full characterization of graphs with constant vertex Seidel energy remains an open problem.
\smallskip
\noindent
(i) If $S^2=\alpha I$ for some $\alpha>0$, then $|S|=\sqrt{\alpha}\,I$ and $\mathcal{E}_S(v_i)=\sqrt{\alpha}$ for every vertex $v_i$ (conference graphs are the primary examples).
\smallskip
\noindent
(ii) If the Seidel spectrum has only two distinct absolute values, then each vertex Seidel energy is the same convex combination of those values; this covers complete, complete bipartite, and Paley graphs.

\smallskip
\noindent
(iii) If $G$ is vertex-transitive, then $|S|$ has a constant diagonal, hence all vertices have the same Seidel energy.
\section{Coulson-type integral representation}
\label{sec:coulson}
In this section we derive a Coulson–type integral representation for the vertex Seidel energy (written in terms of the characteristic polynomial of the Seidel matrix and of its principal minors). Such formulas originate in chemical graph theory, where Coulson’s integral formula expresses the graph energy in terms of its characteristic polynomial.

\begin{theorem}
\label{thm:coulson}
Let \(G\) be a graph with Seidel matrix \(S\). For any vertex \(v_i\),
\[
\mathcal{E}_S(v_i)
=\frac{1}{\pi}\,\mathrm{p.v.}\int_{-\infty}^{\infty}
\Bigg(1 - \mathrm{i} t\;\frac{\Phi_i(\mathrm{i} t)}{\Phi(\mathrm{i} t)}\Bigg)\,dt,
\]
where ``p.v.'' denotes the Cauchy principal value.
\end{theorem}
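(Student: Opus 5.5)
The plan is to combine the resolvent expansion of the diagonal entry with the scalar integral identity for $|\theta|$, and then reassemble the result via Cramer's rule.

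\textbf{Step 1 (resolvent via cofactors).} By Cramer's rule (the adjugate formula), for any $z$ that is not an eigenvalue of $S$,
\[
\big[(zI-S)^{-1}\big]_{ii}=\frac{\Phi_i(z)}{\Phi(z)}.
\]
Inserting the spectral decomposition $S=W\operatorname{diag}(\theta_1,\dots,\theta_n)W^\top$ gives
\[
\frac{\Phi_i(z)}{\Phi(z)}=\sum_{j=1}^n \frac{w_{ij}^2}{z-\theta_j}.
\]
Taking $z=\mathrm{i}t$ with $t\neq0$ and using $\sum_j w_{ij}^2=1$, we obtain
\[
1-\mathrm{i}t\,\frac{\Phi_i(\mathrm{i}t)}{\Phi(\mathrm{i}t)}=\sum_{j=1}^n w_{ij}^2\Big(1-\frac{\mathrm{i}t}{\mathrm{i}t-\theta_j}\Big)=\sum_{j=1}^n w_{ij}^2\,\frac{-\theta_j}{\mathrm{i}t-\theta_j}
=\sum_{j=1}^n w_{ij}^2\,\frac{\theta_j^2+\mathrm{i}t\theta_j}{t^2+\theta_j^2}.
\]
This holds for every real $t$ except possibly $t=0$ when $0$ is an eigenvalue. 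In that case the corresponding terms vanish identically, so the integrand extends continuously.

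\textbf{Step 2 (scalar integrals).} For a real $\theta$ we have
\[
\frac1\pi\int_{-\infty}^{\infty}\frac{\theta^2}{t^2+\theta^2}\,dt=|\theta|,
\]
which is the arctangent integral; both sides equal $0$ when $\theta=0$. The imaginary part $t\theta/(t^2+\theta^2)$ is odd in $t$ and decays only like $1/t$, so it is not absolutely integrable. Its symmetric principal value $\lim_{R\to\infty}\int_{-R}^{R}$ is nevertheless $0$. This is exactly where the ``p.v.'' in the statement is needed.

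\textbf{Step 3 (assemble).} Integrate the finite sum from Step 1 term by term over $[-R,R]$, let $R\to\infty$, and apply Step 2. This gives
\[
\frac1\pi\,\mathrm{p.v.}\!\int\Big(1-\mathrm{i}t\,\tfrac{\Phi_i(\mathrm{i}t)}{\Phi(\mathrm{i}t)}\Big)dt=\sum_{j=1}^n w_{ij}^2|\theta_j|=\mathcal{E}_S(v_i),
\]
where the last equality is the spectral form of the Definition.

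The only delicate point is justifying the interchange and the convergence. The sum is finite, so term-by-term integration is trivial. The real parts are absolutely integrable, and the odd imaginary parts cancel on symmetric intervals, which handles convergence. Possible zero eigenvalues are harmless: the singularity of $\Phi_i/\Phi$ at $t=0$ is cancelled by the factor $\mathrm{i}t$, because the relevant terms are identically zero.
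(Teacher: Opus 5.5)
Your proof is correct and is essentially the paper's argument. It uses the same ingredients: the adjugate identity for the diagonal resolvent entry, its spectral expansion, the split $\theta/(\theta-\mathrm{i}t)=(\theta^2+\mathrm{i}t\theta)/(\theta^2+t^2)$, the arctangent integral for the even part, and symmetric cancellation of the odd part. The only difference is that you run the steps in reverse order, whereas the paper starts from the scalar identity for $|\theta|$ and then recognizes the resolvent.

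One minor aside on your remark that the p.v. is "exactly where it is needed": that holds only term by term. Summed over $j$, the odd parts equal $S_{ii}/t+O(t^{-3})$ with $S_{ii}=0$, so the full integrand is $O(t^{-2})$ and is in fact absolutely integrable.
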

\begin{proof}
Let \(\theta_1,\dots,\theta_n\) be the (real) eigenvalues of \(S\) and let \(\{w_k\}_{k=1}^n\) be an orthonormal basis of eigenvectors with \(w_k=(w_{k,1},\dots,w_{k,n})^\top\). Then
\[
\mathcal{E}_S(v_i)=\sum_{k=1}^n |\theta_k|\, (w_{k,i})^2.
\]
We first show that the scalar identity
\[
|\theta|=\frac{1}{\pi}\,\mathrm{p.v.}\int_{-\infty}^{\infty}\Big(1 - \frac{\mathrm{i} t}{\mathrm{i} t-\theta}\Big)\,dt
\]
holds for every real \(\theta\). If \(\theta=0\) the integrand vanishes identically and both sides are zero, so assume \(\theta\neq0\). Write
\[
1-\frac{i t}{i t-\theta}
=\frac{i t-\theta-i t}{i t-\theta}
=\frac{-\theta}{i t-\theta}
=\frac{\theta}{\theta-i t}.
\]
Hence
\[
\mathrm{p.v.}\int_{-\infty}^{\infty}\Big(1-\frac{i t}{i t-\theta}\Big)\,dt
=\theta\;\mathrm{p.v.}\int_{-\infty}^{\infty}\frac{dt}{\theta-i t}.
\]
Now
\[
\frac{1}{\theta-i t}=\frac{\theta+i t}{\theta^2+t^2}
=\frac{\theta}{\theta^2+t^2}+i\,\frac{t}{\theta^2+t^2}.
\]
The second term is an odd function of \(t\), so its principal value integral over \(\mathbb{R}\) vanishes. Therefore
\[
\mathrm{p.v.}\int_{-\infty}^{\infty}\frac{dt}{\theta-i t}
=\theta\int_{-\infty}^{\infty}\frac{dt}{\theta^2+t^2}
=\theta\cdot\frac{\pi}{|\theta|}=\pi\,\frac{\theta}{|\theta|}.
\]
Combining the two displayed formulas yields
\[
\mathrm{p.v.}\int_{-\infty}^{\infty}\Big(1-\frac{i t}{i t-\theta}\Big)\,dt
=\theta\cdot\pi\,\frac{\theta}{|\theta|}=\pi|\theta|,
\]
and dividing by \(\pi\) gives the identity.
 \\
Multiplying this identity by $(w_{k,i})^2$ and summing over $k$ gives
\[
\mathcal{E}_S(v_i)
=\frac{1}{\pi}\,\mathrm{p.v.}\int_{-\infty}^{\infty}
\Bigg(1 - \mathrm{i} t\sum_{k=1}^n\frac{(w_{k,i})^2}{\mathrm{i} t-\theta_k}\Bigg)\,dt,
\]
Recognizing the finite spectral expansion of the resolvent
\[
(\mathrm{i} t I - S)^{-1}=\sum_{k=1}^n\frac{1}{\mathrm{i} t-\theta_k}\, w_k w_k^\top,
\]
we see that
\[
\sum_{k=1}^n\frac{(w_{k,i})^2}{\mathrm{i} t-\theta_k}=\bigl((\mathrm{i} t I - S)^{-1}\bigr)_{ii}.
\]
Finally, the adjugate/determinant identity yields
\[
\bigl((\mathrm{i} t I - S)^{-1}\bigr)_{ii}=\frac{\det(\mathrm{i} t I - S^{(i)})}{\det(\mathrm{i} t I - S)}
=\frac{\Phi_i(\mathrm{i} t)}{\Phi(\mathrm{i} t)}.
\]
Substituting this into the integral expression above gives
\[
\mathcal{E}_S(v_i)
=\frac{1}{\pi}\,\mathrm{p.v.}\int_{-\infty}^{\infty}
\Big(1 - \mathrm{i} t\,\frac{\Phi_i(\mathrm{i} t)}{\Phi(\mathrm{i} t)}\Big)\,dt.
\]
\end{proof}
\begin{example}
Let \(G=K_n\). Then \(S=I-J\) and, with the usual substitution \(a=\mathrm{i} t-1\), $\mathrm{i} t I - S = a I + J.$ By the Sherman--Morrison formula,
\[
(aI+J)^{-1}=\frac{1}{a}I-\frac{1}{a(a+n)}J,
\]
so the diagonal resolvent entry equals
\[
\bigl((\mathrm{i} t I - S)^{-1}\bigr)_{ii}
=\frac{1}{a}-\frac{1}{a(a+n)}
=\frac{a+n-1}{a(a+n)}.
\]
Hence the integrand appearing in Theorem~\ref{thm:coulson} is
\[
1 - \mathrm{i} t\;\frac{a+n-1}{a(a+n)}\;=\;1 - \mathrm{i} t\;\frac{\mathrm{i}t-1+n-1}{(\mathrm{i}t-1)(\mathrm{i}t-1+n)}\;=\;\frac{n-1}{(t+\mathrm{i})\bigl(t-\mathrm{i}(n-1)\bigr)}.
\]
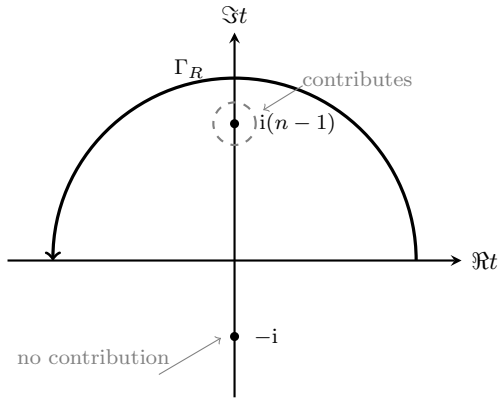
\begin{figure}[htbp]
  \centering
  \vspace{6pt}
  \begin{tikzpicture}[scale=1, every node/.style={font=\small}]
    % parameters
    \def\R{2.4}           % radius for semicircle
    \def\yup{1.8}         % y-position of upper pole (i(n-1) schematic)
    \def\ylow{-1.0}       % y-position of lower pole (-i)
    % styles
    \tikzset{
      axis/.style={->, thick, >=stealth},
      contour/.style={very thick},
      pole/.style={fill=black, circle, inner sep=1.2pt},
      labelsmall/.style={font=\footnotesize}
    }
    % axes
    \draw[axis] (-\R-0.6,0) -- (\R+0.6,0) node[right] {\(\Re t\)};
    \draw[axis] (0,-1.8) -- (0,\R+0.6) node[above] {\(\Im t\)};
    % real axis segment (with arrow indicating positive direction)
    %\draw[contour,->] (-\R,0) -- (\R,0) node[midway, below=3pt] {real axis};
    % semicircle in upper half-plane (arrow indicates orientation)
    \draw[contour,->] (\R,0) arc (0:180:\R);
    % poles (plotted off-axis to avoid overlap with contour)
    \node[pole] (low) at (0,\ylow) {};
    \node[right=2pt,labelsmall] at (low.east) {\( -\mathrm{i}\)};
    \node[pole] (up)  at (0,\yup) {};
    \node[right=4pt,labelsmall] at (up.east) {\( \mathrm{i}(n-1)\)};
    % small highlighted circle around contributing pole
    \draw[thick,dashed,gray] (0,\yup) circle (0.28);
    % arrow/annotation for contributing pole
    \draw[->,gray] (0.9,\yup+0.5) -- (0.38,\yup+0.18) node[midway, right=28pt,above=1pt,labelsmall] {contributes};
    % annotation for non-contributing pole
    \draw[->,gray] (-1.0,\ylow-0.5) -- (-0.18,\ylow+-0.02) node[midway, left=4  pt,labelsmall] {no contribution};
    % contour label
    \node[labelsmall,above] at (-0.6,0.95*\R) {\(\Gamma_R\) };
    % bounding box to avoid overlap with surrounding text
    \useasboundingbox (-\R-0.7,-1.9) rectangle (\R+0.7,\R+0.3);
  \end{tikzpicture}
  \vspace{3pt}
  \caption{Contour in the complex \(t\)-plane for the Coulson integral. The pole at \(t=\mathrm{i}(n-1)\) (upper half-plane) is enclosed and contributes via its residue; the pole at \(t=-\mathrm{i}\) lies in the lower half-plane and does not contribute.}
  \label{fig:contour}
\end{figure}
The integrand is a rational function with simple poles at \(t=-\mathrm{i}\) (lower half-plane) and \(t=\mathrm{i}(n-1)\) (upper half-plane). Closing the contour with a large semicircle in the upper half-plane picks up the residue at \(t=\mathrm{i}(n-1)\); the semicircle contribution vanishes because the integrand decays like \(O(t^{-2})\) as \(|t|\to\infty\).

\[
\mathrm{p.v.}\int_{-\infty}^{\infty}\frac{n-1}{(t+\mathrm{i})(t-\mathrm{i}(n-1))}\,dt
=2\pi \mathrm{i}\;\operatorname{Res}\!\Bigg(\frac{n-1}{(t+\mathrm{i})(t-\mathrm{i}(n-1))},\,t=\mathrm{i}(n-1)\Bigg).
\]
Since the pole at \(t=\mathrm{i}(n-1)\) is simple, its residue equals
\[
\operatorname{Res}\Big(\frac{n-1}{(t+\mathrm{i})(t-\mathrm{i}(n-1))},\,t=\mathrm{i}(n-1)\Big)
=\frac{n-1}{\,\mathrm{i}(n-1)+\mathrm{i}\,}=\frac{n-1}{\mathrm{i} n}=-\frac{\mathrm{i}(n-1)}{n}.
\]
Therefore the contour integral equals
\[
2\pi \mathrm{i}\cdot\Big(-\frac{\mathrm{i}(n-1)}{n}\Big)=\frac{2\pi(n-1)}{n},
\]
and dividing by \(\pi\) as in Theorem~\ref{thm:coulson} yields the per-vertex energy
\[
\mathcal{E}_S(v_i)=\frac{2(n-1)}{n}.
\]
\end{example}

\section{Seidel switching and complementation}
\label{sec:switching-complement}
Seidel switching and graph complementation act naturally on the Seidel matrix and preserve the spectrum of $|S|$, hence the vertex Seidel energies. In this section we prove that both operations leave the vertex Seidel energy invariant.

Let $G$ be a graph with vertex set $V$ and \(X\subseteq V\). Consider \(D_X:=\operatorname{diag}(d_1,\dots,d_n)\) be the diagonal matrix where \(d_i=-1\) when the vertex \(v_i\) belongs to \(X\), and \(d_i=1\) otherwise. Then the Seidel matrices of \(G\) and \(G^X\) satisfy
\[
S(G^X)=D_X\,S(G)\,D_X.
\]
\begin{theorem}
\label{thm:switch-complement-inv}
Let \(G\) be an \(n\)-vertex graph, \(G^X\) a Seidel switch of \(G\) with respect to \(X\). Then for every vertex \(v_i\)
\[
\mathcal{E}_S(v_i;G^X)=\mathcal{E}_S(v_i;G).
\]
\end{theorem}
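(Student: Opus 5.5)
The plan is to work directly with the matrix identity $S(G^X)=D_X S(G) D_X$ recorded just before the statement. Since $D_X$ is diagonal with entries $\pm1$, it satisfies $D_X^\top=D_X$ and $D_X^2=I_n$, so $D_X$ is orthogonal and self-inverse. In other words, $S(G^X)$ is an orthogonal similarity of $S(G)$.

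The first step is to show that the absolute value commutes with this conjugation, that is, $|D_X S D_X| = D_X |S| D_X$. Take the spectral decomposition $S=W\operatorname{diag}(\theta_1,\dots,\theta_n)W^\top$. Then
\[
D_X S D_X=(D_XW)\operatorname{diag}(\theta_1,\dots,\theta_n)(D_XW)^\top,
\]
and $D_XW$ is again orthogonal. By the definition of $|\cdot|$ through a spectral decomposition, this gives
\[
|S(G^X)|=(D_XW)\operatorname{diag}(|\theta_1|,\dots,|\theta_n|)(D_XW)^\top=D_X|S(G)|D_X.
\]
One could instead argue that $(D_X|S|D_X)^2=D_XS^2D_X=S(G^X)^2$, and that $D_X|S|D_X$ is positive semidefinite. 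Uniqueness of the positive semidefinite square root then gives the same identity.

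The second step is to read off the diagonal entries:
\[
\bigl[D_X|S|D_X\bigr]_{ii}=d_i^2\,|S|_{ii}=|S|_{ii},
\]
because $d_i=\pm1$. This is exactly $\mathcal{E}_S(v_i;G^X)=\mathcal{E}_S(v_i;G)$.

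The only point that needs care is justifying that $|\cdot|$ is well defined, independent of the chosen eigenbasis, so that computing it in the basis $D_XW$ is legitimate. I would cite uniqueness of the positive semidefinite square root of $S^2$ for this. There is no substantive obstacle beyond that.

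Alternatively, one could use the spectral form $\mathcal{E}_S(v_i)=\sum_j w_{ij}^2|\theta_j|$. The eigenvectors of $S(G^X)$ are $D_Xw_j$, whose $i$-th coordinates are $d_iw_{ij}$, with squares $w_{ij}^2$. The same eigenvalues therefore give the same sum.
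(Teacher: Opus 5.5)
Your proposal is correct and follows essentially the same route as the paper. Both write $S(G^X)=D_XS(G)D_X$, pass the conjugation through the absolute value to get $|S(G^X)|=D_X|S(G)|D_X$, and read off $d_i^2|S|_{ii}=|S|_{ii}$; your justification of the middle step (via the orthogonal eigenbasis $D_XW$, or uniqueness of the positive semidefinite square root) is slightly more careful than the paper's one-line appeal to the spectral decomposition.
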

\begin{proof}
Write \(S=S(G)\). For switching use \(D=D_X\). Since \(D\) is diagonal with entries \(\pm1\) we have \(D^{-1}=D\) and \(D^2=I\), and $S'= S(G^X) = D S D.$ Therefore
\[
S'^2 = D S^2 D,
\qquad |S'|=(S'^2)^{1/2}=D (S^2)^{1/2} D = D\,|S|\,D,
\]
where the middle equality follows from conjugation through the spectral decomposition. Taking diagonal entries gives
\[
|S'|_{ii} = e_i^\top D |S| D e_i = d_i^2\,|S|_{ii} = |S|_{ii},
\]
so \(\mathcal{E}_S(v_i;G^X)=|S'|_{ii}=\mathcal{E}_S(v_i;G)\) for every \(i\).
\end{proof}
\begin{theorem}
Let \(G\) be a graph and \(\overline G\) its complement. Then for every vertex \(v_i\) 
\[
\mathcal{E}_S(v_i;\overline G)=\mathcal{E}_S(v_i;G),
\]
\end{theorem}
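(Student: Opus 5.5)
The plan is to relate the two Seidel matrices directly. For the complement $\overline G$ we have $A(\overline G)=J-I-A(G)$. Substituting into $S=J-I-2A$ gives
\[
S(\overline G)=J-I-2(J-I-A(G))=-(J-I-2A(G))=-S(G).
\]
The complement thus simply negates the Seidel matrix, and no switching is needed, in contrast to Theorem~\ref{thm:switch-complement-inv}.

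Next, I will show that $|S|$ is unchanged under negation. Writing $S'=S(\overline G)=-S$, we get $S'^2=(-S)^2=S^2$. Since $|S'|$ is defined as the unique positive semidefinite square root of $S'^2$, it follows that $|S'|=(S^2)^{1/2}=|S|$. Equivalently, in spectral terms, if $S=W\operatorname{diag}(\theta_1,\dots,\theta_n)W^\top$ then $-S=W\operatorname{diag}(-\theta_1,\dots,-\theta_n)W^\top$. The eigenvectors $w_{ij}$ are therefore the same and $|{-\theta_j}|=|\theta_j|$, so the spectral formula $\mathcal{E}_S(v_i)=\sum_j w_{ij}^2|\theta_j|$ gives identical values.

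Finally, taking the $(i,i)$ entry yields $\mathcal{E}_S(v_i;\overline G)=|S'|_{ii}=|S|_{ii}=\mathcal{E}_S(v_i;G)$ for every $i$. There is no real obstacle here. The only point needing care is the uniqueness of the positive semidefinite square root, which justifies $|{-S}|=|S|$; the spectral-decomposition argument above sidesteps it anyway. One could also remark that combining this with Theorem~\ref{thm:switch-complement-inv} shows invariance under any sequence of switchings and complementations.
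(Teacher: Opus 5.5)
Your proof is correct and follows the paper's argument: you derive $S(\overline G)=-S(G)$ from $A(\overline G)=J-I-A(G)$, and then note that negation keeps the eigenvectors and flips the signs of the eigenvalues, so $|S(\overline G)|=|S(G)|$ and the diagonal entries agree. Your observation that $(-S)^2=S^2$ already gives $|{-S}|=(S^2)^{1/2}=|S|$ is just a slightly more direct phrasing of that same step.
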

\begin{proof}
Using \(S(G)=J-I-2A(G)\) and \(A(\overline G)=J-I-A(G)\) we obtain
\[
S(\overline G)=J-I-2A(\overline G)=J-I-2(J-I-A(G))=-S(G).
\]
Hence \(S(\overline G)\) is the negative of \(S(G)\). In particular, if
\[
S(G)=W\operatorname{diag}(\theta_1,\dots,\theta_n)W^\top
\]
with orthogonal \(W\), then
\[
S(\overline G)=W\operatorname{diag}(-\theta_1,\dots,-\theta_n)W^\top.
\]
Thus \(S(G)\) and \(S(\overline G)\) have the same eigenvectors, while their eigenvalues are negated. Consequently, the weight matrix \(P\) is identical for \(G\) and \(\overline G\), and
\[
|S(\overline G)|=|-S(G)|=|S(G)|.
\]
It follows that for every vertex \(v_i\),
\[
\mathcal{E}_S(v_i;\overline G)=\bigl|S(\overline G)\bigr|_{ii}
=\bigl|S(G)\bigr|_{ii}=\mathcal{E}_S(v_i;G).
\]
\end{proof}

\section{Concluding remark.}
The vertex Seidel energy provides a natural vertex-level refinement of Seidel energy, combining spectral, combinatorial, and analytic viewpoints. The results obtained here demonstrate that this invariant admits explicit formulas, meaningful bounds, and efficient analytical representations, opening several directions for further study.

\noindent\textbf{Acknowledgments.}
E.~Andrade's work is supported by CIDMA (\url{https://ror.org/05pm2mw36}) under the Portuguese Foundation for Science and Technology (FCT, \url{https://ror.org/00snfqn58}), through the grants
\href{https://doi.org/10.54499/UID/04106/2025}{UID/04106/2025} and
\href{https://doi.org/10.54499/UID/PRR/04106/2025}{UID/PRR/04106/2025}.

\end{document}